
\documentclass[runningheads,a4paper]{llncs}

\usepackage{amssymb}
\setcounter{tocdepth}{3}
\usepackage{graphicx}
\usepackage{url}
\newcommand{\keywords}[1]{\par\addvspace\baselineskip
\noindent\keywordname\enspace\ignorespaces#1}


\usepackage{amsfonts,amscd}
\usepackage{color}
\usepackage{hyperref,multirow}

\newcommand{\bC}{\mathbb{C}}
\newcommand{\bR}{\mathbb{R}}
\newcommand{\bP}{\mathbb{P}}
\def\rank{{\mathop{\rm rank~}\nolimits}}
\def\conj{{\mathop{\rm conj}\nolimits}}
\newcommand{\eqref}[1]{(\ref{#1})}
\newcommand{\Cert}{{\hbox{\bf Certify}}}

\newcommand{\vppp}{\widehat{\mathbb{P}}^3}

\renewcommand{\:}{\colon}

\usepackage{fixme}

\begin{document}

\title{Certifying reality of projections}
\author{Jonathan D. Hauenstein\inst{1}, 
Avinash Kulkarni\inst{2}, \\
Emre C. Sert\"oz\inst{3}, \and
Samantha N. Sherman\inst{1}}

\authorrunning{Hauenstein-Kulkarni-Sert\"oz-Sherman}
\institute{\scriptsize
Department of Applied and Computational Mathematics and Statistics, \\
University of Notre Dame, USA\\
\email{\{hauenstein,ssherma1\}@nd.edu, \texttt{www.nd.edu/$\sim$jhauenst}}
\thanks{JDH was supported by Sloan Fellowship BR2014-110 TR14 and NSF ACI-1460032.  
SNS was supported by Schmitt Leadership Fellowship in Science~and~Engineering.}
\and
Department of Mathematics, 
Simon Fraser University, Canada \\
\email{akulkarn@sfu.ca}
\thanks{AK was partially supported by the Max Planck Institute for Mathematics in the Sciences}
\and 
Max Planck Institute for Mathematics in the Sciences, Leipzig, Germany \\
\email{emresertoz@gmail.com, \texttt{www.emresertoz.com}}
}

\maketitle
\begin{abstract}
Computational tools in numerical algebraic geometry can be used to numerically
approximate solutions to a system of polynomial equations.  
If the system is well-constrained (i.e., square), Newton's method
is locally quadratically convergent near each nonsingular solution.  
In such cases, Smale's alpha theory can be used to certify that a given
point is in the quadratic convergence basin of some solution.  
This was extended to certifiably determine the reality of the corresponding 
solution when the polynomial system is real.  Using the theory of Newton-invariant sets,
we certifiably decide the reality of projections of solutions.  
We apply this method to certifiably count the number of 
real and totally real tritangent planes for instances of curves of genus~4.  
\keywords{Certification, alpha theory, Newton's method, real solutions, numerical algebraic geometry}
\end{abstract}

\section{Introduction}\label{Sec:Intro}

For a well-constrained system of polynomial equations $f$, 
numerical algebraic geometric tools (see, e.g., \cite{BHSW,SW05})
can be used to compute numerical approximations of solutions of $f = 0$.  
These approximations can be certified to lie in a quadratic convergence
basin of Newton's method applied to $f$ using
Smale's $\alpha$-theory (see, e.g., \cite[Chap.~8]{BCSS}).
When the system $f$ is real, $\alpha$-theory can be used to certifiably
determine if the true solution corresponding to an approximate solution is real~\cite{alphaCertified}.
That is, one can certifiably decide whether or not every coordinate of a 
solution is real from a sufficiently accurate approximation. 
It is often desirable in computational algebraic geometry to instead decide the reality of a projection of a solution of a real polynomial system. In this manuscript, we develop an approach for this situation using Newton-invariant sets \cite{NewtonInvariant}.

The paper is organized as follows.
Section~\ref{Sec:alphaTheory} provides a summary of Smale's $\alpha$-theory
and Newton-invariant sets.  Section~\ref{Sec:Certification} provides
our main results regarding certification of reality of projections.
Section~\ref{Sec:Tritangents} applies the method to
certifying real and totally real tritangents of various genus~$4$ curves.

\section{Smale's alpha theory and Newton-invariant sets}\label{Sec:alphaTheory}

Our certification procedure is based on the ability to certify
quadratic convergence of Newton's method via Smale's $\alpha$-theory 
(see, e.g., \cite[Chap.~8]{BCSS}) and Newton-invariant sets \cite{NewtonInvariant}.
This section summarizes these two items following \cite{NewtonInvariant}.

Assume that $f\: \bC^n\rightarrow\bC^n$ is an
analytic map and consider the Newton iteration map
$N_f\: \bC^n\rightarrow\bC^n$ defined by
{\small
$$N_f(x) := \left\{\begin{array}{ll} x - Df(x)^{-1} f(x) & \hbox{~~~if $Df(x)$ is invertible,} \\ x & \hbox{~~~otherwise,} \end{array}\right.$$
}where $Df(x)$ is the Jacobian matrix of $f$ at $x$.
The map $N_f$ is globally defined with fixed points $\{x\in\bC^n~|~f(x) = 0 \hbox{~~or~~}\rank Df(x) < n\}$. Hence, if $Df(x)$ is invertible and $N_f(x) = x$, then $f(x) = 0$.

One aims to find solutions of $f = 0$ by iterating $N_f$ to locate fixed points.
To that end, for each $k\geq1$, define $N_f^k(x) := \underbrace{N_f \circ \cdots \circ N_f}_{\hbox{\scriptsize $k$ times}}(x)$.

\begin{definition}\label{def:approxSoln}
A point $x\in\bC^n$ is an {\em approximate solution} of $f = 0$ if there exists~$\xi\in\bC^n$ such that $f(\xi) = 0$ and $\|N_f^k(x) - \xi\| \leq \left(\frac{1}{2}\right)^{2^k-1} \|x - \xi\|$ for each~$k\geq1$ where $\|\cdot\|$ is the Euclidean norm on $\bC^n$.
The point $\xi$ is the {\em associated solution} to $x$
and the sequence $\{N_f^k(x)\}_{k\geq0}$ converges quadratically~to~$\xi$.
\end{definition}

Smale's $\alpha$-theory provides sufficient conditions 
for $x$ to be an approximate solution of $f = 0$
via data computable from $f$ and $x$.
We will use approximate solutions to determine characteristics 
of the corresponding associated solutions using Newton-invariant sets.

\begin{definition}\label{def:NewtonInvariant}
A set $V\subset\bC^n$ is called {\em Newton invariant} with respect to $f$
if $N_f(v)\in V$ for every $v\in V$ and $\lim_{k\rightarrow\infty} N_f^k(v) \in V$
for every $v\in V$ such that this limit exists.  
\end{definition}

For example, the set $V = \bR^n$ is Newton invariant with respect to a real map~$f$.
The algorithm presented in Section~\ref{Sec:Certification} 
considers both the set of real numbers as well as other Newton-invariant sets
to perform certification together with the following theorem 
derived from~\cite[Ch.~8]{BCSS} and \cite{NewtonInvariant}.

\begin{theorem}\label{Thm:alpha}
Let $f\:\bC^n\rightarrow\bC^n$ be analytic, let $V\subset\bC^n$ be Newton invariant with respect to $f$, let
$x,y\in\bC^n$ such that $Df(x)$ and $Df(y)$ are invertible,~and let 
{\small
$$\begin{array}{rclcrcl}
\alpha(f,x) & := & \beta(f,x)\cdot\gamma(f,x), & &
                     \beta(f,x) & := & \|x - N_f(x)\| = \|Df(x)^{-1} f(x)\|,  \\ \rule{0pt}{16pt}
                     \gamma(f,x) & := & {\scriptsize 
\displaystyle\sup_{k\geq 2} \left\|\frac{Df(x)^{-1} D^kf(x)}{k!}\right\|^{\frac{1}{k-1}}}, & & 
                     \delta_V(x) & := & \displaystyle\inf_{v\in V}\|x - v\|
                     \end{array}$$
}where the norms are the corresponding vector and operator Euclidean norms.
\begin{enumerate}
\item\label{Item:1} If $4\cdot\alpha(f,x) < 13 - 3\sqrt{17}$, then $x$ is an approximate solution of $f = 0$.
\item\label{Item:2} If $100\cdot \alpha(f,x) < 3$ and $20\cdot \|x - y\|\cdot\gamma(f,x) < 1$, then $x$ and $y$ are approximate solutions of $f = 0$ with the same associated solution.
\item\label{Item:3} Suppose that $x$ is an approximate solution of $f = 0$ with associated solution~$\xi$.
\begin{enumerate}
\item\label{Item:3a} $N_f(x)$ is also an approximate solution with associated solution $\xi$ and 
$$\|x - \xi\| \leq 2 \beta(f,x) = 2\|x - N_f(x)\| = 2\|Df(x)^{-1}f(x)\|.$$
\item\label{Item:3b} If $\delta_V(x) > 2\beta(f,x)$, then $\xi\notin V$.
\item\label{Item:3c} If $100\cdot \alpha(f,x) < 3$ and $20\cdot \delta_V(x)\cdot\gamma(f,x) < 1$, then $\xi \in V$.
\end{enumerate}
\end{enumerate}
\end{theorem}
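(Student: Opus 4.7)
The theorem packages three kinds of statements. Items~\ref{Item:1}, \ref{Item:2}, and the bound in~\ref{Item:3a} are standard Smale $\alpha$-theory results that I would import essentially verbatim from \cite[Ch.~8]{BCSS}; item~\ref{Item:3b} is a short contrapositive of~\ref{Item:3a}; and item~\ref{Item:3c} is the place where Newton invariance of $V$ actually enters. More specifically, item~\ref{Item:1} is the classical $\gamma$-theorem with its sharp constant $13 - 3\sqrt{17}$, and item~\ref{Item:2} is the ``two-points-in-one-basin'' robustness lemma. For~\ref{Item:3a}, applying Definition~\ref{def:approxSoln} at $k=1$ yields $\|N_f(x) - \xi\| \leq \frac{1}{2}\|x - \xi\|$, and then the triangle inequality $\|x - \xi\| \leq \|x - N_f(x)\| + \|N_f(x) - \xi\|$ rearranges to $\|x - \xi\| \leq 2\beta(f,x)$; that $N_f(x)$ inherits approximate-solution status with the same $\xi$ is a standard re-indexing of the quadratic-convergence bound. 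Item~\ref{Item:3b} is then immediate: if $\xi \in V$ then $\delta_V(x) \leq \|x - \xi\| \leq 2\beta(f,x)$, contradicting the hypothesis.

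\textbf{The new ingredient, item~\ref{Item:3c}.} The plan is to produce a witness $v \in V$ that shares the Newton basin of $x$ and then to let Newton invariance carry the resulting limit into $V$. From the strict inequality $20\cdot\delta_V(x)\cdot\gamma(f,x) < 1$ together with the definition $\delta_V(x) = \inf_{v \in V}\|x - v\|$, I can select $v \in V$ with $\|x - v\|$ only slightly larger than $\delta_V(x)$ and still satisfying $20\cdot\|x - v\|\cdot\gamma(f,x) < 1$. Item~\ref{Item:2} applied with $y := v$ then gives that both $x$ and $v$ are approximate solutions of $f = 0$ sharing a common associated solution $\xi$. By Newton invariance (Definition~\ref{def:NewtonInvariant}), the whole Newton orbit $\{N_f^k(v)\}_{k\geq 0}$ remains in $V$, and since $\lim_{k\to\infty} N_f^k(v) = \xi$ exists, the second clause of Definition~\ref{def:NewtonInvariant} forces $\xi \in V$.

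\textbf{Expected main obstacle.} The only genuine technical caveat is that item~\ref{Item:2} requires $Df(v)$ to be invertible, whereas the hypotheses of~\ref{Item:3c} control $Df$, $\alpha$, and $\gamma$ only at the single point $x$. I expect to dispatch this with the standard quantitative $\gamma$-theoretic fact that $Df$ remains invertible on a ball of radius of order $1/\gamma(f,x)$ around any point at which $\alpha$ is sufficiently small; the combined hypotheses $100\cdot\alpha(f,x) < 3$ and $20\cdot\delta_V(x)\cdot\gamma(f,x) < 1$ then place the chosen $v$ comfortably inside that ball, and all quantities in item~\ref{Item:2} are well defined. Once this invertibility is secured, items~\ref{Item:3b} and~\ref{Item:3c} both reduce to the short arguments above, layered on top of the imported statements~\ref{Item:1}, \ref{Item:2}, and~\ref{Item:3a}.
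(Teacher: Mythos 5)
Your proposal is correct and follows essentially the same route as the paper, which does not prove Theorem~\ref{Thm:alpha} itself but imports it from \cite{BCSS} and \cite{NewtonInvariant}: items~\ref{Item:1}, \ref{Item:2}, and \ref{Item:3a} are the standard $\alpha$-theory statements, item~\ref{Item:3b} is the contrapositive of the $2\beta$ bound exactly as you argue, and item~\ref{Item:3c} is established in \cite{NewtonInvariant} precisely by your argument (choose $v\in V$ with $20\,\|x-v\|\,\gamma(f,x)<1$ using strictness of the hypothesis, apply item~\ref{Item:2} with $y:=v$, and let Newton invariance force the common associated solution into $V$). The only minor caveat is that ``re-indexing'' alone does not formally derive from Definition~\ref{def:approxSoln} that $N_f(x)$ is again an approximate solution with the same associated solution—this fact, like your invertibility of $Df(v)$ near $x$, is part of the quantitative $\alpha$/$\gamma$-theory being imported—but since both you and the paper treat these as citations to \cite{BCSS,alphaCertified,NewtonInvariant}, there is no substantive discrepancy.
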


The value $\beta(f,x)$ is the Newton residual.  
When $f$ is a polynomial system,~$\gamma(f,x)$ is a maximum over finitely many terms and thus can be easily bounded above~\cite{SS93}. 
A similar bound for polynomial-exponential systems can be found in~\cite{HL}.  The value~$\delta_V(x)$ is the distance
between $x$ and~$V$.  The special case of $V = \bR^n$ was first
considered in~\cite{alphaCertified}.

The following procedure from \cite{NewtonInvariant},
which is based on Theorem~\ref{Thm:alpha}, certifiably decides
if the associated solution of a given approximate solution 
lies in a given Newton-invariant set $V$.

{\small
\begin{description}
  \item[Procedure $b = \Cert(f,x,\delta_V)$]
  \item[Input] A well-constrained analytic system $f\: \bC^n\rightarrow\bC^n$ such that $\gamma(f,\cdot)$ can be computed (or bounded) algorithmically, a point $x\in\bC^n$ which is an approximate solution of $f = 0$ with associated solution $\xi$ such that $Df(\xi)^{-1}$ exists,
and distance function $\delta_V$ for some Newton-invariant set $V$ that 
can be computed algorithmically.
  \item[Output] A boolean $b$ which is {\tt true} if $\xi\in V$ and {\tt false} if $\xi\notin V$.
  \item[Begin] \hskip -0.1in
  \begin{enumerate}
    \item\label{Step:1} Compute $\beta := \beta(f,x)$, $\gamma := \gamma(f,x)$, $\alpha := \beta\cdot\gamma$, and $\delta := \delta_V(x)$.
    \item\label{Step:2} If $\delta > 2\beta$, {\bf Return} {\tt false}.
    \item\label{Step:3} If $100\cdot \alpha < 3$ and $20\cdot \delta\cdot\gamma < 1$, {\bf Return} {\tt true}.
    \item\label{Step:4} Update $x := N_f(x)$ and go to Step~\ref{Step:1}.
  \end{enumerate}
\end{description}
}

\section{Certification of reality}\label{Sec:Certification}

The systems under consideration are well-constrained polynomial systems
{\footnotesize
\begin{equation}\label{eq:System}
f(a,b_1,\dots,b_k,c_1,\dots,c_\ell,d_1,\dots,d_\ell) = \left[\begin{array}{cl} g(a) \\ p(a,b_i) & \hbox{for~}i=1,\dots,k 
\\ p(a,c_i) & \hbox{for~} i = 1,\dots,\ell \\ p(a,d_i) & \hbox{for~} i = 1,\dots,\ell\end{array}\right]
\end{equation}
}with variables $a\in\bC^m$ and $b_r,c_s,d_t\in\bC^q$,
and polynomial systems $g\: \bC^m\rightarrow\bC^u$ and $p\: \bC^{m+q}\rightarrow\bC^w$ which have 
real coefficients such that 
\begin{equation}\label{eq:Bounds}
u \leq m \hbox{~~~and~~~} m + (k+2\ell)q = u + (k+2\ell)w.
\end{equation}
The first condition in \eqref{eq:Bounds} yields that $a$ is not over-constrained by $g$
while the second condition provides that the whole system is well-constrained.

\begin{example}\label{ex:Illustrative}
To illustrate the setup, we consider an example with
$m = 3$, $k = 0$, $\ell = 1$, $q = 1$, $u = 1$, and $w = 2$
so that \eqref{eq:Bounds} holds, resulting in a well-constrained system of $5$ polynomials
in $5$ variables. Namely, we consider
$$\hbox{\small $
f(a,c,d) = \left[\begin{array}{c} 
g(a) \rule{0pt}{2.6ex} \rule[-1.5ex]{0pt}{0pt} \\ \hline 
p(a,c) \rule{0pt}{2.6ex} \rule[-1.5ex]{0pt}{0pt} \\ \hline \rule{0pt}{2.6ex}
p(a,d)
\end{array}\right]
=$} \hbox{\tiny $
\left[\begin{array}{c}
a_1^2 + a_2^2 + a_3^2 - 1  \rule[-1.8ex]{0pt}{0pt} \\ \hline 
a_1 + (1-c^2)(a_2 c + a_3 c^2) \rule{0pt}{3ex} \\
a_1 (3 c^2 - 1) + a_2 (2 c^5 - 4c^3 + 2c - 1) \rule{0pt}{3ex} \rule[-1.8ex]{0pt}{0pt} \\ \hline 
a_1 + (1-d^2)(a_2 d + a_3 d^2) \rule{0pt}{3ex} \\
a_1 (3 d^2 - 1) + a_2 (2 d^5 - 4d^3 + 2d - 1) \rule{0pt}{3ex} \rule[-1.8ex]{0pt}{0pt} \\
\end{array}\right]$}.$$
\end{example}

Since the polynomial system $f$ in \eqref{eq:System} has real coefficients,
we can use Theorem~\ref{Thm:alpha} with $V = \bR^n$ 
where $n = m+(k+2\ell)q = u+(k+2\ell)w$ to 
certifiably determine if all coordinates of the associated solution are simultaneously real.

\begin{example}\label{ex:Illustrative2}
Let $f$ be the polynomial system with real coefficients considered in Ex.~\ref{ex:Illustrative}
with Newton-invariant set $V = \bR^5$.
For the points $P_1$ and $P_2$, respectively:
$$
\begin{array}{l}
\hbox{\tiny $\displaystyle\left(
\frac{1543}{8003} + \frac{\sqrt{-1}}{530485174},
\frac{-34488}{50521} - \frac{\sqrt{-1}}{190996265},
\frac{32768}{46489} - \frac{\sqrt{-1}}{310964547},
\frac{6713}{18120} + \frac{4777\sqrt{-1}}{19088},
\frac{6713}{18120} - \frac{4538\sqrt{-1}}{18133}\right)$,}\\[0.1in]
\hbox{\tiny $\displaystyle\left(
\frac{18245}{111912} - \frac{\sqrt{-1}}{772703930},
\frac{15244}{38793} - \frac{\sqrt{-1}}{307556791},
\frac{27099}{29944} - \frac{\sqrt{-1}}{155308656},
\frac{-44817}{40271} - \frac{\sqrt{-1}}{372454657},
\frac{8603}{8149} + \frac{\sqrt{-1}}{608134511}\right),$}
\end{array}
$$
alphaCertified \cite{alphaCertified} computed the following information:
{\footnotesize
$$\begin{array}{c|c|c|c|c}
j~ & ~\hbox{upper bound of~} \alpha(f,P_j)~&~\beta(f,P_j)~&~\hbox{upper bound of~} \gamma(f,P_j)~&~\delta_{\bR^5}(P_j) \\ \hline
1 & 1.32\cdot10^{-5} & 2.05\cdot10^{-8} & 6.40\cdot10^2 & 0.35 \rule{0pt}{1.6ex} \\
2 & 2.38\cdot10^{-4} & 1.47\cdot10^{-8} & 1.63\cdot10^2 & 7.98\cdot10^{-9}
\end{array}$$
}Item~\ref{Item:1} of Theorem~\ref{Thm:alpha} yields 
that both points $P_1$ and $P_2$ 
are approximate solutions of \mbox{$f = 0$}.
Suppose that $\xi_1$ and $\xi_2$, respectively, are the corresponding associated solutions.
Items~\ref{Item:3b} and~\ref{Item:3c}, respectively, provide that $\xi_1\notin\bR^5$ and $\xi_2\in\bR^5$.  
\end{example}

Rather than consider all coordinates simultaneously, the following shows that we can 
certifiably decide the reality of some of the coordinates.

\begin{theorem}\label{Thm:NewtonInv}
For $f$ as in \eqref{eq:System}, the set
\begin{equation}\label{eq:V}
V = \hbox{\small $
\left\{(a,b_1,\dots,b_k,c_1,\dots,c_\ell,\conj(c_1),\dots,\conj(c_\ell)) \in \bR^m \times (\bR^{q})^k \times (\bC^{q})^{2\ell}\right\}$}
\end{equation}
is Newton invariant with respect to $f$ where $\conj()$ denotes complex conjugate.
\end{theorem}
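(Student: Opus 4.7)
The plan is to exhibit $V$ as the fixed-point set of an involution $\sigma$ of $\bC^n$ that commutes with the Newton iteration $N_f$. Once this equivariance is established, every $v \in V$ satisfies $\sigma(N_f(v)) = N_f(\sigma(v)) = N_f(v)$, so $N_f(v) \in V$; and since $V$ is the fixed set of a continuous map it is closed in $\bC^n$, so any limit of the orbit $\{N_f^k(v)\}$ that exists automatically lies in $V$. This handles both clauses of Definition~\ref{def:NewtonInvariant}.

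Define $\sigma \colon \bC^n \to \bC^n$ by
$$\sigma(a, b_1, \dots, b_k, c_1, \dots, c_\ell, d_1, \dots, d_\ell) := (\conj(a), \conj(b_1), \dots, \conj(b_k), \conj(d_1), \dots, \conj(d_\ell), \conj(c_1), \dots, \conj(c_\ell)).$$
This is an involution whose fixed set is exactly $V$ as given in \eqref{eq:V}. To decouple the conjugation from the swap, let $\pi \colon \bC^n \to \bC^n$ be the $\bR$-linear coordinate permutation that interchanges the $c$-block with the $d$-block, and (by abuse of notation) also denote by $\pi$ the analogous permutation of output components that swaps the $p(a, c_i)$-equations with the $p(a, d_i)$-equations in \eqref{eq:System}. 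Because these two blocks of equations are defined by the same map $p$, one has $f \circ \pi = \pi \circ f$ on $\bC^n$, and hence $Df(\pi y) = \pi \cdot Df(y) \cdot \pi^{-1}$ by the chain rule.

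Since $f$ has real coefficients, $f(\conj(y)) = \conj(f(y))$ and $Df(\conj(y)) = \conj(Df(y))$ componentwise. Writing $\sigma(x) = \pi(\conj(x))$ and combining the two symmetries yields
$$f(\sigma(x)) = \pi\,\conj(f(x)), \qquad Df(\sigma(x)) = \pi\,\conj(Df(x))\,\pi^{-1}.$$
When $Df(x)$ is invertible, so is $Df(\sigma(x))$, and a short computation gives
$$N_f(\sigma(x)) = \sigma(x) - Df(\sigma(x))^{-1} f(\sigma(x)) = \pi\bigl(\conj(x) - \conj(Df(x)^{-1} f(x))\bigr) = \sigma(N_f(x)).$$
When $Df(x)$ is singular, so is $Df(\sigma(x))$, and both sides reduce to $\sigma(x)$ by the definition of $N_f$. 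This establishes $N_f \circ \sigma = \sigma \circ N_f$ and completes the proof. The main obstacle is the bookkeeping of threading the coordinate permutation $\pi$ correctly through complex conjugation; once the structural identity $f \circ \pi = \pi \circ f$ is read off from the block form of \eqref{eq:System}, the remainder of the argument is formal.
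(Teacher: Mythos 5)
Your proof is correct and rests on exactly the same two ingredients as the paper's: the real coefficients of $f$ (so conjugation commutes with $f$ and $Df$) and the symmetry of \eqref{eq:System} under swapping the $c$- and $d$-blocks, plus closedness of $V$ for the limit clause. The only difference is packaging—you prove the global equivariance $N_f\circ\sigma=\sigma\circ N_f$ for the involution $\sigma=\pi\circ\conj$ and identify $V$ as its fixed set, whereas the paper performs the same computation pointwise on $v\in V$ by reading off that $\conj(\Delta v)$ is $\Delta v$ with the $c$- and $d$-blocks swapped—so this is essentially the paper's argument.
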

\begin{proof}
Suppose that $v=(a,b_1,\dots,b_k,c_1,\dots,c_\ell,d_1,\dots,d_\ell) \in V$
such that the Jacobian matrix $Df(v)$ is invertible.  Let $\Delta v = Df(v)^{-1}f(v)$ and write
$$\Delta v = \left[\begin{array}{cccccccccc} 
\Delta a^T & \Delta b_1^T & \cdots & \Delta b_k^T & \Delta c_1^T & \cdots & \Delta c_\ell^T &
\Delta d_1^T & \cdots & \Delta d_\ell^T\end{array}\right]^T.$$
Since $f$ has real coefficients, we know that
$$\conj(\Delta v) = \conj(Df(v)^{-1}f(v)) = Df(\conj(v))^{-1} f(\conj(v)).$$
Since $v\in V$, $\conj(v)=(a,b_1,\dots,b_k,d_1,\dots,d_\ell,c_1,\dots,c_\ell) \in V$.
Based on the structure of $f$, it immediately follows that
$$\hbox{\scriptsize $\conj(\Delta v) = 
Df(\conj(v))^{-1} f(\conj(v)) = 
\left[\begin{array}{cccccccccc} 
\Delta a^T & \Delta b_1^T & \cdots & \Delta b_k^T & \Delta d_1^T & \cdots & \Delta d_\ell^T & 
\Delta c_1^T & \cdots & \Delta c_\ell^T \end{array}\right]^T$}.$$
Hence, $\conj(\Delta a) = \Delta a$, $\conj(\Delta b_i) = \Delta b_i$,
and $\conj(\Delta c_j) = \Delta d_j$.  Thus, it immediately follows that
$N_f(v) = v - \Delta v \in V$.  

The remaining condition in Defn.~\ref{def:NewtonInvariant} follows
from the fact that $V$ is closed.\qed
\end{proof}

All that remains to utilize $\Cert$ is to provide 
a formula for $\delta_V$.

\begin{proposition}\label{prop:Distance}
For any $x = (a,b_1,\dots,b_k,c_1,\dots,c_\ell,d_1,\dots,d_\ell)\in\bC^{m+(k+2\ell)q}$ and~$V$ as in \eqref{eq:V},
{\small
\begin{equation}\label{eq:DeltaV}
\hbox{\footnotesize $\delta_V(x) = 
\displaystyle\frac{1}{2}$}
\left\|
\hbox{\footnotesize $
\begin{array}{l}
(a-\conj(a), b_1 - \conj(b_1), \dots, b_k - \conj(b_k), \\
~~c_1 - \conj(d_1), \dots, c_\ell - \conj(d_\ell), d_1 - \conj(c_1), \dots, d_\ell - \conj(c_\ell))
\end{array}$}\right\|.
\end{equation}}
\end{proposition}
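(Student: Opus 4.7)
My strategy is to exploit the product structure of $V$: the defining conditions decouple into ``$a\in\bR^m$'', ``$b_i\in\bR^q$'' for each $i$, and ``$(c_j,d_j)$ lies on the conjugate locus $\{(w,\conj(w)):w\in\bC^q\}$'' for each $j$. Since the Euclidean norm splits as a sum of squared norms over these independent blocks, $\delta_V(x)^2$ likewise splits as a sum of the squared distances from each block of $x$ to the corresponding factor, and I would handle each separately.

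For a block with constraint ``be real'', the nearest real vector to $z\in\bC^N$ is its componentwise real part, and writing $z=u+\sqrt{-1}\,v$ with $u,v\in\bR^N$ gives squared distance $\|v\|^2=\frac{1}{4}\|z-\conj(z)\|^2$. Applied to $a$ and each $b_i$, this recovers exactly the contributions $\frac{1}{4}\|a-\conj(a)\|^2$ and $\frac{1}{4}\|b_i-\conj(b_i)\|^2$ appearing inside the norm in \eqref{eq:DeltaV}.

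The only substantive computation is the pair block: minimize $F(w):=\|c_j-w\|^2+\|d_j-\conj(w)\|^2$ over $w\in\bC^q$. Splitting $w$ into real and imaginary parts makes $F$ a positive-definite real quadratic in $2q$ variables; setting the gradient to zero (or completing the square componentwise) yields the unique minimizer $w^\star=\frac{1}{2}(c_j+\conj(d_j))$, at which $F(w^\star)=\frac{1}{2}\|c_j-\conj(d_j)\|^2$. Using $\|\conj(z)\|=\|z\|$, this rewrites as $\frac{1}{4}\bigl(\|c_j-\conj(d_j)\|^2+\|d_j-\conj(c_j)\|^2\bigr)$, matching the symmetric form in \eqref{eq:DeltaV}. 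Summing all block contributions and taking square roots produces the formula. The pair minimization is the only step requiring any real work, but as a routine quadratic optimization I anticipate no serious obstacle.
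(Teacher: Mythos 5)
Your proof is correct and follows essentially the same route as the paper: the paper simply writes down the orthogonal projection of $x$ onto $V$ (whose blocks are exactly your $\tfrac12(a+\conj(a))$, $\tfrac12(b_i+\conj(b_i))$, and $w^\star=\tfrac12(c_j+\conj(d_j))$ together with $\conj(w^\star)$) and computes $\|x-v\|$, while you additionally justify via the blockwise quadratic minimization that this point is indeed the nearest point of $V$.
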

\begin{proof}
The projection of $x = (a,b_1,\dots,b_k,c_1,\dots,c_\ell,d_1,\dots,d_\ell)$ onto~$V$~is
{\small 
$$\begin{array}{l}
v = \frac{1}{2}(a+\conj(a), b_1 + \conj(b_1), \dots, b_k + \conj(b_k), \\
~~~~~~~~~~~~~~~c_1 + \conj(d_1), \dots, c_\ell + \conj(d_\ell), d_1 + \conj(c_1), \dots, d_\ell + \conj(c_\ell)).
\end{array}$$
}Thus, $\delta_V(x) = \|x - v\|$ which simplifies to \eqref{eq:DeltaV}.\qed
\end{proof}

\begin{example}\label{ex:Illustrative3}
For the polynomial system $f$ considered in Ex.~\ref{ex:Illustrative},
Theorem~\ref{Thm:NewtonInv} provides that
$V = \{(a,c_1,\conj(c_1))\in\bR^3\times\bC\times\bC\}$
is Newton invariant with respect to $f$.  
Let $\xi_1$ be the associated solution
of the first point $P_1$ from Ex.~\ref{ex:Illustrative}.
Since~$\delta_V(P_1) = 8.88\cdot10^{-9}$,
we know $\xi_1\in V$
using the data from Ex.~\ref{ex:Illustrative2}
together with Item~\ref{Item:3c} of Theorem~\ref{Thm:alpha},
i.e., the first three coordinates of $\xi_1$ are real
and the last two coordinates are complex conjugates of each other.
Hence, $\xi_1\in V\setminus \bR^5$.
\end{example}

\section{Tritangents}\label{Sec:Tritangents}

We conclude by applying this new certification method 
to a problem from real algebraic geometry considered in \cite{KRNS17,K18}. 
A \emph{smooth space sextic} is a nonsingular algebraic curve $C\subset\bP^3$ which is the intersection of a quadric
surface~$Q$ and cubic surface $\Gamma$. The curve $C$ is a curve of degree $6$ and genus $4$, and every hyperplane of $\bP^3$ intersects $C$ in exactly $6$ points (counting multiplicities). The problem considered in \cite{KRNS17,K18} concerns counting the number of hyperplanes 
which are tangent to $C$ at all points of intersection.

\begin{definition}
A plane $H\subset\bP^3$ is a {\em tritangent plane} for $C$ 
if every point in~$C\cap H$ has even intersection multiplicity.
\end{definition}

In the generic case, each tritangent plane intersects $C$
in $3$ points, each with multiplicity~$2$, and there are a total
of $120$ complex tritangent planes. For simplicity, we henceforth restrict our attention to the generic case. Each of the $120$ tritangent planes can be categorized as either totally real, real, or nonreal.  

\begin{definition}
A tritangent plane $H$ is \emph{real} if it can be expressed as the 
solution set of a linear equation with real coefficients and 
\emph{nonreal} otherwise. A real tritangent plane is 
\emph{totally real} if each point in $C\cap H$ is real.
\end{definition}

\begin{example}\label{ex:Sextic16}
The smooth space sextic curve $C\subset\bP^3$ equal to
$$\{[x_0,x_1,x_2,x_3]\in\bP^3~|~x_0^2+x_0x_3 = x_1x_2, 
~x_0x_2(x_0 + x_1 + x_3) = x_3(x_1^2 - x_2^2 + x_3^2)
\}$$
has $16$ real tritangents, $7$ of which are totally real, 
and $104$ nonreal tritangents.  
\end{example}

\subsection{Counting real and totally real tritangents}

Gross and Harris \cite{GH81} prove that the number of real tritangents of a genus 4 curve is either 0, 8, 16, 24, 32, 64 or 120. This number depends only on the topological properties of the real part of the curve,
as summarized in Table~\ref{tab:Results}.

\begin{example}
Since the curve $C$ in Ex.~\ref{ex:Sextic16} has $16$ real tritangents,
it follows from~\cite{GH81} that the real part of $C$ consists of two connected components.
\end{example}

In contrast, \emph{totally} real tritangents reflect the \emph{extrinsic} geometry of the real part of the curve.
Indeed, Kummer \cite{K18} recently obtained bounds on 
the number of totally real tritangents for each real topological type.
We will use our certification procedure to prove results
that help close the gaps between the theoretical bounds and instances
which have actually been realized.  

To that end, we formulate a
well-constrained parameterized polynomial system 
of the form \eqref{eq:System} as follows.
For a generic smooth space sextic $C=Q\cap\Gamma\subset\bP^3$,
let $q$ and $c$ be quadric and cubic polynomials
that define $Q$ and $\Gamma$, respectively.  By
assuming the coordinates are in general position, we solve in affine space
by setting the first coordinate equal to $1$.  In particular, 
we are seeking $a \in\bC^3$, $x_1,x_2,x_3\in\bC^3$, 
and $\lambda_1, \lambda_2, \lambda_3\in\bC^2$ such that
\begin{equation}\label{eq:TritangentSystem}
\hbox{\scriptsize $
f(h,x_1,\lambda_1,x_2,\lambda_2,x_3,\lambda_3) = \left[\begin{array}{c}
p(h,x_1,\lambda_1) \\p(h,x_2,\lambda_2) \\p(h,x_3,\lambda_3)
\end{array}\right] = 0
\hbox{~~with~~}
p(h,x_i,\lambda_i) = 
\left[\begin{array}{c}
H(X_i) \\
q(X_i) \\
c(X_i) \\
\left[\begin{array}{c} \nabla_x H(X_i) \\ \nabla_x q(X_i) \\ \nabla_x c(X_i) 
\end{array}\right] \Lambda_i
\end{array}
\right]$}
\end{equation}
where $H = [1,h]\in\vppp$, $X_i = [1,x_i]\in\bP^3$, $\Lambda_i = [1,\lambda]\in\bP^2$,
and \hbox{$\nabla_x \zeta([1,x])\in\bC^3$} is the gradient of $\zeta$ with respect to $x$.  
In particular, $f$ is a system of $18$ polynomials in $18$ variables
with the first $3$ polynomials in $p$ enforcing that $X_i\in C\cap H$
and the last $3$ polynomials providing that $H$ is tangent to $C$ at $X_i$.  
The values of $k$ and $\ell$ from \eqref{eq:System}
are dependent on the number of real points in $C\cap H$.  
A real tritangent $H$ will either have three or one real points in $C\cap H$
corresponding, respectively to totally real tritangents ($k = 3$ and $\ell = 0$)
and real tritangents that are not totally real ($k = \ell = 1$).

\begin{remark}\label{remark:Parameter}
For generic quadric $q$ and cubic $c$, the condition $f = 0$ in \eqref{eq:TritangentSystem} has $120\cdot 3! = 720$ isolated solutions
where the factor $3! = 6$ corresponds to trivial reorderings.  By selecting one point
in each orbit, \eqref{eq:TritangentSystem} can be used as a parameter homotopy~\cite{ParameterHomotopy},
where the parameters are the coefficients of $q$ and $c$, to compute tritangents for generic
smooth space sextic curves.
\end{remark}

\subsection{Computational results}\label{Sec:Results}

In the following, we utilize Bertini \cite{Bertini} 
to numerically approximate the tritangents via a parameter
homotopy following Remark~\ref{remark:Parameter}.  
After heuristically classifying the tritangents as either totally real, real, or nonreal, 
we use the results from Section~\ref{Sec:Certification}
applied to $f$ in \eqref{eq:TritangentSystem} to certify the results
using alphaCertified \cite{alphaCertified}.  More computational details 
for applying our approach to the examples that follow 
can be found at \url{http://dx.doi.org/10.7274/R0DB7ZW2}.  
The reported timings are based on using 
either one (in serial) or all 64 (in parallel) cores of a 2.4GHz AMD Opteron Processor 
6378 with 128 GB RAM.  

\begin{example}\label{Ex:16}
For $i=1,2$, let $C_i\subset\bP^3$ be defined by $q_i = c_i = 0$ where
\vspace{-0.05in}
$${\tiny
\begin{array}{l}
q_1(x) = q_2(x) = x_0 x_3 - x_1 x_2 \\[0.03in]
c_1(x) = (25x_0^3 - 24x_0^2x_1 - 89x_0^2x_2 - 55x_0^2x_3 - 14x_1^3 - 31x_1^2x_2 + 86x_1x_2x_3 + 74x_2^2x_3 - 45x_2x_3^2 - 62x_3^3)/100 \\[0.03in]
c_2(x) = (89x_0^3 - 41x_0^2x_1 - 87x_0x_1^2 - 26x_0x_2^2 - 25x_1^2x_2 + 42x_1^2x_3 + 56x_1x_2^2 + 87x_2^3 - 67x_2x_3^2 - 42x_3^3)/100.
\end{array}}
$$
We first use a parameter homotopy in Bertini following Remark~\ref{remark:Parameter}
to numerically approximate the solutions of $f = 0$ in \eqref{eq:TritangentSystem}.
Each of these instances took approximately 45 seconds in serial and 
1.5 seconds in parallel to compute all numerical solutions to roughly
$50$ correct digits.  Converting to rational numbers 
and applying alphaCertified to each instance 
shows that all numerical approximations computed
by Bertini are approximate solutions in roughly $33$ minutes using 
rational arithmetic with serial processing.

First, we certify that we have indeed computed~$120$ distinct 
tritangents up to the action of reordering.  This is accomplished by comparing the pairwise distances between
the $h$ coordinates corresponding to the tritangent hyperplane 
with the known error bound $2\beta$ from Item~\ref{Item:3a} of Theorem~\ref{Thm:alpha}.
In both of our examples, $2\beta < 10^{-54}$ while the pairwise distances were larger than $10^{-2}$
showing that $120$ distinct tritangents were computed as expected.

Second, we compare the size of the imaginary parts of the $h$ coordinates
with the error bound $2\beta$ to certifiably determine which are nonreal tritangets.  
For both cases, this proves that there are $104$ nonreal tritangents leaving 
$16$ tritangents requiring further investigation.  

Third, we apply \Cert~with $V = \bR^{18}$ to certifiably determine the number of totally real tritangents.
This proves that $C_1$ and $C_2$ have exactly~$0$ and $16$ totally real tritangents, respectively.

The only remaining item is to show that the $16$ tritangents for $C_1$ are real
which follows from our new results in Section~\ref{Sec:Certification}.
We reorder the intersection points so that the first one has the smallest imaginary part
and apply \Cert~with $V$ as in \eqref{eq:V} where $k = 1$ and $\ell = 1$, i.e., one real
intersection point and a pair of complex conjugate intersection points.  

In summary, these computations prove that both $C_1$ and $C_2$ have 16 real tritangents, 
where none and all of these $16$ are totally real, respectively.  
\end{example}

Example~\ref{Ex:16} provides two new instances of results that had
not been realized in~\cite{KRNS17}.  Combining these two examples together
with results from \cite{KRNS17,K18} shows that any number 
between $0$ and $16$ totally real tritangents
can be realized for a smooth sextic curve which has $16$ real tritangents.  
In Table 4.2 we summarize the theoretical bounds from \cite{K18} for the number of totally real tritangents, together with the values that are realized in \cite{KRNS17} and our computations (including the computations we describe below). 
In particular, the bold numbers
show new results we obtained using our certification approach. Only
5~open cases remain to be realized or shown to be impossible: 64 real tritangents with all 
64 totally real and 120 real tritangents with 80 -- 83 totally real tritangents.  
\begin{table}[!ht]\label{tab:Results}
{\small
\begin{center}
\begin{tabular}{c|c|c|c|c}
\# real & \# connected &  dividing & range of & realized \# totally real \\
tritangents \cite{GH81} & real components & type? & \# totally real \cite{K18} & (\cite{KRNS17} \& our results) \\
\hline
0 & 0 & No & [0,0] & [0,0] \\
8 & 1 & No & [0,8] & [0,8] \\
16 & 2 & No & [0,16] & [{\bf 0},{\bf 16}] \\
24 & 3 & Yes & [0,24] & [0,{\bf 24}] \\
32 & 3 & No & [8,32] & [{\bf 8},32] \\
64 & 4 & No & [32,64] & [{\bf 32},63] \\
120 & 5 & Yes & [80,120] & [84,120]
\end{tabular}
\end{center}
\caption{Summary of results for tritangents of genus $4$ curves
with {\bf bold} numbers showing the new results obtained using our certification approach.}
}\end{table}

In our computations to generate these results, we started with the Cayley cubic
\mbox{$c = -x_0^2x_2+x_0^2x_3+x_1^2x_2+x_1^2x_3+x_2^2x_3-x_3^3$}~and~selected 
quadrics $q$ which intersected various real components of the Cayley cubic surface $\Gamma$ defined by~$c$.  
We then randomly perturbed all of the coefficients 
of $q$ and $c$ to locally explore the surrounding area of the parameter space
of the selected instance.  As in Ex.~\ref{Ex:16}, Bertini was used to compute
numerical approximations of the solutions with
certification provided by alphaCertified.

\section{Acknowledgments}

The authors thank Mario Kummer and Bernd Sturmfels for many wonderful 
discussions regarding tritangents.


\begin{thebibliography}{50}

\def\bysame{\leavevmode\hbox to3em{\hrulefill}\thinspace}


\bibitem{Bertini}
D.J. Bates, J.D. Hauenstein, A.J. Sommese, and C.W. Wampler.
	\newblock Bertini: Software for numerical algebraic geometry.
\newblock Available at \url{bertini.nd.edu}.

\bibitem{BHSW}
\bysame.
\newblock {\em Numerically solving polynomial systems with {B}ertini},
  volume~25 of {\em Software, Environments, and Tools}.
\newblock Society for Industrial and Applied Mathematics~(SIAM), 
Philadelphia,~2013.

\bibitem{BCSS} 
L. Blum, F. Cucker, M. Shub, and S. Smale.
\newblock {\em Complexity and Real Computation}.
\newblock Springer-Verlag, New York, 1998.

\bibitem{NewtonInvariant}
J.D. Hauenstein.
\newblock Certification using Newton-invariant subspaces.
\newblock {\em LNCS}, 10693, 34--50, 2017.

\bibitem{HL}
J.D. Hauenstein and V. Levandovskyy.
\newblock Certifying solutions to square systems of polynomial-exponential equations.
\newblock {\em J. Symb. Comput.}, 79(3), 575--593, 2017.

\bibitem{alphaCertified}
J.D. Hauenstein and F. Sottile.
\newblock Algorithm 921: alphaCertified: certifying solutions to polynomial systems.
\newblock {\em ACM ToMS}, 38(4), 28, 2012.

\bibitem{GH81}
B.H. Gross and J. Harris.
\newblock Real algebraic curves.
\newblock {\em Ann. Sci. \'Ecole Norm. Sup. (4)}, 14(2), 157--182, 1981.

\bibitem{KRNS17}
A. Kulkarni, Y. Ren, M.S. Namin, and B. Sturmfels.
\newblock Real space sextics and their tritangents.
\newblock arXiv:1712.06274, 2017.

\bibitem{K18}
M. Kummer.
\newblock Totally real theta characteristics.
\newblock arXiv:1802.05297, 2018.

\bibitem{ParameterHomotopy}
A.P. Morgan and A.J. Sommese.
\newblock Coefficient-parameter polynomial continuation.
\newblock {\em Appl. Math. Comput.} 29, 123--160, 1989.


\bibitem{SS93}
M. Shub and S. Smale.  
\newblock Complexity of B\'ezout's theorem. I. Geometric aspects.
\newblock {\em J.~Amer. Math. Soc.}, 6(2), 459--501, 1993.

\bibitem{SW05}
A.J. Sommese and C.W. Wampler II.
\newblock {\em The Numerical Solution of Systems of Polynomials Arising in Engineering and Science.}
\newblock World Scientific Publishing, Hackensack, NJ, 2005.

\end{thebibliography}
\end{document}